\documentclass[11pt,reqno]{amsart}

\usepackage[T1]{fontenc}
\usepackage[utf8]{inputenc}
\usepackage{amsmath,amssymb,amsfonts,amsthm,mathtools}
\usepackage[margin=1.1in]{geometry}
\usepackage[colorlinks=true,linkcolor=blue,citecolor=blue,urlcolor=blue]{hyperref}

\numberwithin{equation}{section}

\newtheorem{theorem}{Theorem}[section]
\newtheorem{proposition}[theorem]{Proposition}
\newtheorem{lemma}[theorem]{Lemma}
\newtheorem{corollary}[theorem]{Corollary}
\theoremstyle{remark}
\newtheorem{remark}[theorem]{Remark}

\DeclareMathOperator{\tr}{tr}
\newcommand{\R}{\mathbb R}
\newcommand{\Id}{\mathrm{Id}}
\newcommand{\ii}{\mathrm{i}}
\newcommand{\e}{\mathrm{e}}
\newcommand{\dd}{\,\mathrm{d}}
\newcommand{\norm}[1]{\left\|#1\right\|}
\newcommand{\abs}[1]{\left|#1\right|}
\newcommand{\la}[1]{\langle #1\rangle}

\title[Relativistic momentum-only Schauder counterexamples]{A note on relativistic kinetic Schauder estimates}
\author{Weinan Wang}
\address{Department of Mathematics, University of Oklahoma, Norman, OK, USA, 73072}
\email{ww@ou.edu}
\thanks{W. Wang was partially supported by the Simons Foundation TSM grant (No.~0007730).}
\subjclass[2020]{35B65, 35H10, 35Q84, 82C40}
\keywords{relativistic kinetic equation, Schauder estimates, hypoelliptic regularity, counterexample}
\date{\today}

\begin{document}

\begin{abstract}
In this note, motivated by \cite{DW26}, we construct counterexamples to momentum-only Schauder estimates for linear kinetic equations with relativistic transport. The construction exploits a structural compatibility between the nonlinear momentum-to-velocity map and a single fixed pair of smooth diffusion and drift coefficients: after conjugation, the diffusion becomes the Laplacian and all induced first-order terms cancel. Thus the obstruction is realized for one fixed operator, uniformly elliptic on every bounded momentum set, rather than through a frequency-dependent family of coefficients. We further show that the failure persists with zero external forcing by constructing uniformly positive stationary solutions for which the hidden spatial oscillation is encoded in a bounded zeroth-order coefficient whose momentum H\"older seminorm tends to zero. Consequently, the full Lorentzian H\"older control used in \cite{HSTT} cannot in general be replaced by momentum-only H\"older control.

\end{abstract}

\maketitle

\section{Introduction}\label{sec:intro}

In this note we study the regularity of linear kinetic equations with relativistic transport,
\begin{equation}\label{eq:rel-linear}
   \partial_t f+\frac{p}{\la p}\cdot\nabla_x f
   =\tr\!\left(A(p)D_p^2f\right)+B(p)\cdot\nabla_pf+s,
   \qquad (t,x,p)\in\R\times\R^3\times\R^3,
\end{equation}
where $\la p=(1+|p|^2)^{1/2}$.  Equations of this form underlie the regularity theory of the relativistic Landau equation \cite{SG04,ST19}.  Henderson, Snelson, Tarfulea, and Taskovi\'c \cite{HSTT} recently established conditional regularity and decay estimates for the relativistic Landau equation and, as a key linear ingredient, proved  Schauder estimate for equations of the form \eqref{eq:rel-linear} \cite[Theorem~6.5]{HSTT}.  That estimate is formulated in Lorentzian kinetic H\"older spaces, is singled out in \cite{HSTT} as being of independent interest, and, as in the Galilean theory, assumes an intrinsic H\"older modulus in all nontrivial variables---in particular a spatial modulus. In the Galilean case the transport is $v\cdot\nabla_x$ and the quantitative theory is organized by the kinetic dilations $(t,x,v)\mapsto(r^2t,r^3x,rv)$, which pair H\"older regularity $C_v^\alpha$ in velocity with $C_x^{\alpha/3}$ in space.  Kinetic Schauder estimates in this scaling, with progressively weaker regularity in time, were obtained in \cite{IM21,HW24,BB24,LPP23,DY25}.  The endpoint question---whether the spatial modulus can be removed entirely, so that $D_v^2f$ is controlled by velocity H\"older norms of the coefficients and forcing alone---was raised in \cite[Conjecture~1.3]{HW24} and recently answered in the negative by Dong and the author in \cite{DW26}: the velocity-only kinetic Schauder estimate is false, already for the constant-coefficient Kolmogorov equation with bounded forcing independent of the velocity variable.

The purpose of this note is to settle the same endpoint for relativistic transport.  We exhibit one fixed smooth operator of the form \eqref{eq:rel-linear}, with the coefficients \eqref{eq:AB-def}, uniformly elliptic on every fixed momentum ball, together with an explicit family of smooth stationary solutions $f_N$, driven by forcings $s_N$ independent of the momentum variable, such that on the unit kinetic cylinder
\[
   \norm{f_N}_{L^\infty}\le CN^{-2/3},
   \qquad
   [s_N]_{C_p^\alpha}=0,
   \qquad
   [D_p^2f_N]_{C_p^\alpha}\ge cN^{\alpha/3}.
\]
Hence no local Schauder estimate can control the momentum H\"older seminorm of the momentum Hessian by momentum H\"older norms of the solution, the coefficients, and the forcing alone (Theorem~\ref{thm:rel-counterexample} and Corollary~\ref{cor:no-momentum-only-nonlinear-control}).  The failure persists with zero source: the hidden spatial oscillation can be placed in a bounded zeroth-order coefficient whose momentum H\"older seminorm tends to zero (Proposition~\ref{prop:rel-zero-source}).

The construction is exact rather than perturbative.  The key observation, used systematically in \cite{HSTT}, is that
\[
   \Phi(p):=\frac{p}{\la p}
\]
maps momentum space onto the unit velocity ball and conjugates relativistic transport to Galilean kinetic transport.  We choose the coefficients $A$ and $B$ so that the complete transformed equation is the constant-coefficient Kolmogorov equation (Lemma~\ref{lem:conjugation}), and then transport the one-frequency construction of \cite{DW26}: an explicit spatial Fourier mode whose momentum Hessian oscillates by an order-one amount across the kinetic length scale $N^{-1/3}$, while the forcing is independent of momentum. The example identifies the missing quantity precisely.  The forcing $s_N=\cos(Nx_1)$ is invisible to every momentum seminorm, while its spatial H\"older seminorm at the kinetic exponent has size $N^{\alpha/3}$, exactly the size of the Hessian growth.  The construction is therefore consistent with the Lorentzian Schauder estimate of \cite{HSTT} and shows that its spatial modulus cannot be omitted (Remarks~\ref{rem:consistency} and~\ref{rem:lorentz}).
\section{An exact conjugation to the Kolmogorov equation}\label{sec:conjugation}

For $p\in\R^3$, set
\[
   \Phi(p)=\frac{p}{\la p},
   \qquad
   \Psi(v)=\frac{v}{\sqrt{1-|v|^2}},\quad |v|<1.
\]
Thus $\Phi$ and $\Psi$ are inverse diffeomorphisms between $\R^3$ and $B_1$.
For $r>0$, write
\[
   \mathcal Q_r=(-r^2,0]\times B_{r^3}^x\times B_r^p.
\]
For a function $h=h(t,x,p)$ on a product cylinder $Q$, define its momentum H\"older seminorm by
\[
   [h]_{C_p^\alpha(Q)}
   :=\sup_{\substack{(t,x,p),(t,x,p')\in Q\\0<|p-p'|<1/2}}
   \frac{|h(t,x,p)-h(t,x,p')|}{|p-p'|^\alpha}.
\]
The spatial seminorm $[h]_{C_x^\beta(Q)}$ is defined analogously, with the supremum taken over pairs $(t,x,p),(t,x',p)\in Q$ with $0<|x-x'|<1/2$.  For functions of $p$ alone, such as the coefficients below, the same momentum seminorm is used on a momentum ball.  For matrix-valued functions we use the Frobenius norm. Let
\begin{equation}\label{eq:AB-def}
   A(p):=\la p^2(\Id+p\otimes p)^2,
   \qquad
   B(p):=\bigl(3\la p^4+2\la p^2\bigr)p.
\end{equation}
Both coefficients are smooth.  On every fixed momentum ball they are bounded, and $A$ is uniformly elliptic.  In particular, on $B_1$,
\[
   \Id\le A(p)\le 8\Id.
\]

\begin{lemma}[Exact conjugation]\label{lem:conjugation}
Let $f(t,x,p)=u(t,x,\Phi(p))$.  Then, under the identification $v=\Phi(p)$, the equation
\begin{equation}\label{eq:exact-conj}
   \partial_t f+\frac{p}{\la p}\cdot\nabla_xf
   =\tr(A(p)D_p^2f)+B(p)\cdot\nabla_pf+s(t,x)
\end{equation}
holds if and only if
\begin{equation}\label{eq:kolmo}
   \partial_tu+v\cdot\nabla_xu=\Delta_vu+s(t,x).
\end{equation}
\end{lemma}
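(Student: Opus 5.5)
The plan is to compute the operator on the right of \eqref{eq:exact-conj} applied to $f=u\circ\Phi$ by the chain rule, and to show that it equals $(\Delta_v u+s)\circ\Phi$ pointwise. Because $\Phi:\R^3\to B_1$ is a diffeomorphism, such a pointwise identity of operators gives the ``if and only if'' at once. The time derivative and the source need no work, since $\partial_t f=(\partial_t u)\circ\Phi$ and $s=s(t,x)$ is unaffected by the change of variables. The transport term is also immediate: $\frac{p}{\la p}\cdot\nabla_x f=(v\cdot\nabla_x u)\circ\Phi$ with $v=\Phi(p)$.

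Write $J=D\Phi(p)$, so that
\[
J_{kj}=\partial_j\Phi_k=\frac{\delta_{jk}}{\la p}-\frac{p_jp_k}{\la p^3}=\frac{\la p^2\Id-p\otimes p}{\la p^3}.
\]
Direct multiplication gives
\[
(\la p^2\Id-p\otimes p)(\Id+p\otimes p)=\la p^2\Id,
\]
and hence $J^{-1}=\la p(\Id+p\otimes p)$. Since $J$ is symmetric, this means $A=J^{-1}J^{-T}$, i.e. $JAJ^T=\Id$. The chain rule gives
\[
D_p^2f=J^T(D_v^2u)J+\sum_k\partial_{v_k}u\,D^2\Phi_k .
\]
Therefore
\[
\tr(AD_p^2f)=\tr(JAJ^TD_v^2u)+\sum_k\partial_{v_k}u\,\tr(AD^2\Phi_k)=\Delta_vu+\sum_k\partial_{v_k}u\,\tr(AD^2\Phi_k),
\]
and $B\cdot\nabla_pf=\sum_k\partial_{v_k}u\,(JB)_k$. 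The whole lemma thus reduces to the vector identity
\[
\tr(AD^2\Phi_k)+(JB)_k=0,\qquad k=1,2,3.
\]

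This first-order cancellation is the only real computation. I would carry it out explicitly. Differentiating $J_{kj}$ once more gives
\[
\partial_i\partial_j\Phi_k=-\frac{\delta_{jk}p_i+\delta_{ik}p_j+\delta_{ij}p_k}{\la p^3}+\frac{3p_ip_jp_k}{\la p^5}.
\]
Next expand
\[
A=\la p^2 M,\qquad M:=\Id+a\,p\otimes p,\qquad a=2+|p|^2 .
\]
Write $r=|p|^2$; note that $1+ar=\la p^4$. Contracting the second derivatives against $A$, and using $Mp=(1+ar)p$, $\tr M=3+ar$ and $p^TMp=r(1+ar)$, the result is proportional to $p_k$. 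Simplifying with $r=\la p^2-1$, it should come out as
\[
\tr(AD^2\Phi_k)=-\frac{3\la p^2+2}{\la p}\,p_k .
\]
On the other side, $B$ is parallel to $p$ and $Jp=p/\la p^3$, so
\[
JB=\frac{3\la p^4+2\la p^2}{\la p^3}\,p=\frac{3\la p^2+2}{\la p}\,p .
\]
This is exactly the negative of the previous display, so the first-order terms cancel.

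The main obstacle is bookkeeping in that contraction: keeping the three Kronecker terms and the cubic term straight, and simplifying via $1+ar=\la p^4$. If the algebra becomes messy, I would organize it using rotational symmetry. Both sides are equivariant vector fields, hence of the form $c(|p|)p$, so it suffices to compare the scalar factors, for instance by taking $p=|p|e_1$ and checking a single component.
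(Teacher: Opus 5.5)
Your proof is correct. It does not follow the argument given in the paper's proof, but it is exactly the direct chain-rule check that the paper mentions without details in Remark~\ref{rem:direct}.

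The paper never differentiates $u\circ\Phi$ itself. Instead it quotes the change-of-variables formula \cite[Lemma~6.3]{HSTT} for the transformed coefficients $\widetilde A=qRAR$ and $\widetilde B$ in the velocity variable, with $q=1-|v|^2$ and $R=\Id-v\otimes v$. It then verifies $\widetilde A=\Id$ and $\widetilde B=0$ by eigenvalue bookkeeping, using only that $R$ has eigenvalues $1,1,q$ with $v$ in the radial direction.

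Your $J=D\Phi$ equals $\sqrt q\,R$. So your identities $JAJ^{\mathsf T}=\Id$ and $\tr(AD^2\Phi_k)+(JB)_k=0$ are precisely $\widetilde A=\Id$ and $\widetilde B=0$ rewritten in $p$. The real difference is that you derive the first-order term yourself rather than importing it.

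Your hedged ``should come out as'' is right. The contraction equals $-\bigl(2(Ap)_k+(\tr A)p_k\bigr)\la p^{-3}+3(p\cdot Ap)\,p_k\la p^{-5}$. With $Ap=\la p^6p$, $\tr A=\la p^2(2+\la p^4)$ and $p\cdot Ap=\la p^6|p|^2$, this is $-(3\la p^2+2)p_k/\la p$, which cancels $JB$.

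The paper's route is shorter and makes the choice $A(\Psi(v))=q^{-1}R^{-2}$ look natural in velocity variables. However, it relies on an external formula whose conventions must be trusted. Yours is self-contained. The equivalence follows cleanly from the pointwise identity between the two residuals together with the bijectivity of $\Phi$ onto $B_1$.
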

Next, we give the proof of Lemma \ref{lem:conjugation}.
\begin{proof}
First, we write
\[
   q=1-|v|^2,
   \qquad
   R(v)=\Id-v\otimes v.
\]
The change-of-variables formula in \cite[Lemma~6.3]{HSTT} gives
\[
   \widetilde A(v)=qR(v)A(\Psi(v))R(v)
\]
and
\[
\begin{aligned}
   \widetilde B(v)
   &=\sqrt q\,R(v)B(\Psi(v)) \\
   &\quad+q\Bigl[\bigl(3v\cdot A(\Psi(v))v-\tr A(\Psi(v))\bigr)v
          -2A(\Psi(v))v\Bigr].
\end{aligned}
\]
Since
\[
   R(v)^{-1}=\Id+p\otimes p,
   \qquad p=\Psi(v),
   \qquad q=\la p^{-2},
\]
the definition of $A$ is equivalent to
\[
   A(\Psi(v))=q^{-1}R(v)^{-2}.
\]
Hence $\widetilde A(v)=\Id$.

The matrix $R(v)$ has eigenvalues $1,1,q$, with $v$ in the radial eigendirection.  Therefore
\[
   A(\Psi(v))v=q^{-3}v,
   \qquad
   v\cdot A(\Psi(v))v=(1-q)q^{-3},
   \qquad
   \tr A(\Psi(v))=2q^{-1}+q^{-3}.
\]
It follows that
\[
   \Bigl(3v\cdot A(\Psi(v))v-\tr A(\Psi(v))\Bigr)v
   -2A(\Psi(v))v
   =-\frac{2q+3}{q^2}v.
\]
On the other hand, the definition of $B$ is equivalent to
\[
   B(\Psi(v))=\frac{2q+3}{q^{5/2}}v.
\]
Since $R(v)v=qv$, we obtain
\[
   \sqrt q\,R(v)B(\Psi(v))=\frac{2q+3}{q}v,
\]
while the second term in $\widetilde B$ equals $-(2q+3)q^{-1}v$.  Thus
$\widetilde B=0$, proving the lemma.
\end{proof}

\begin{remark}\label{rem:direct}
In fact, the above lemma can also be checked by the chain rule alone: with $D\Phi=\la p^{-1}(\Id-v\otimes v)$, the definitions \eqref{eq:AB-def} are equivalent to the two coefficient identities
\[
   D\Phi\,A\,(D\Phi)^{\mathsf T}=\Id,
   \qquad
   \bigl(D\Phi\,B\bigr)_k+\tr\!\bigl(A\,D_p^2\Phi_k\bigr)=0
   \quad(k=1,2,3),
\]
which hold identically in $p$.
\end{remark}

\section{The counterexample}\label{sec:counterexample}

We now use the exact one-frequency Kolmogorov construction of \cite{DW26}.  For $N\ge1$, define
\begin{equation}\label{eq:IN}
   I_N(\eta)=\int_0^\infty
      \exp\!\left(-\frac{N^2s^3}{3}\right)\e^{-\ii Ns\eta}\,\dd s
\end{equation}
and
\begin{equation}\label{eq:rel-fN}
\begin{aligned}
   u_N(x,v)&=\operatorname{Re}\!\left(\e^{\ii Nx_1}I_N(v_1)\right),\\
   f_N(x,p)&=u_N(x,\Phi(p)),\\
   s_N(x)&=\cos(Nx_1).
\end{aligned}
\end{equation}
The identity
\[
   I_N''(\eta)-\ii N\eta I_N(\eta)=-1,
\]
obtained in the proof of \cite[Theorem~2.1]{DW26} by differentiating the integrand in $s$, shows that $u_N$ solves \eqref{eq:kolmo}, and Lemma~\ref{lem:conjugation} shows that $f_N$ solves \eqref{eq:exact-conj} with the fixed coefficients \eqref{eq:AB-def}.

\begin{theorem}[Relativistic momentum-only counterexample]\label{thm:rel-counterexample}
Let $\alpha\in(0,1)$.  There are constants $c,C>0$ and $N_0\ge1$ such that, for every $N\ge N_0$, the smooth stationary solution $f_N$ in \eqref{eq:rel-fN} satisfies on
\[
   \mathcal Q_1=(-1,0]\times B_1^x\times B_1^p
\]
the bounds
\[
   \norm{f_N}_{L^\infty(\mathcal Q_1)}\le CN^{-2/3},
   \qquad
   [s_N]_{C_p^\alpha(\mathcal Q_1)}=0,
\]
and
\[
   [D_p^2f_N]_{C_p^\alpha(\mathcal Q_{1/2})}
   \ge cN^{\alpha/3}.
\]
Consequently, for the fixed smooth coefficients $A,B$ in \eqref{eq:AB-def}, no estimate of the form
\begin{equation}\label{eq:rel-false-est}
   [D_p^2f]_{C_p^\alpha(\mathcal Q_{1/2})}
   \le C_{A,B,\alpha}
   \left(\norm f_{L^\infty(\mathcal Q_1)}+[s]_{C_p^\alpha(\mathcal Q_1)}\right)
\end{equation}
can hold for all smooth solutions of \eqref{eq:rel-linear}.  More generally, the family rules out any proposed local Schauder estimate whose right-hand side involves only quantities that remain uniformly bounded along it; this includes momentum H\"older norms of the coefficients and forcing of any order, with no spatial modulus.  Corollary~\ref{cor:no-momentum-only-nonlinear-control} below makes this precise.
\end{theorem}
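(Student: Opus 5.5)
The plan is to reduce everything to a single rescaled profile. Substituting $s=N^{-2/3}\sigma$ in \eqref{eq:IN} gives
\[
   I_N(\eta)=N^{-2/3}J\bigl(N^{1/3}\eta\bigr),\qquad J(z):=\int_0^\infty \e^{-\sigma^3/3}\e^{-\ii\sigma z}\dd\sigma .
\]
The function $J$ is smooth, and every derivative $J^{(k)}(z)=\int_0^\infty(-\ii\sigma)^k\e^{-\sigma^3/3}\e^{-\ii\sigma z}\dd\sigma$ is bounded uniformly in $z\in\R$.

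\emph{The two easy bounds.} Since $|J|\le\int_0^\infty\e^{-\sigma^3/3}\dd\sigma$, we get
\[
   \norm{f_N}_{L^\infty}\le \sup_\eta|I_N(\eta)|\le CN^{-2/3},
\]
and this holds on all of $\R\times\R^3\times\R^3$. The forcing $s_N=\cos(Nx_1)$ does not depend on $p$, so $[s_N]_{C_p^\alpha}=0$ trivially.

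\emph{The Hessian lower bound.} Write $u_N=\operatorname{Re}(\e^{\ii Nx_1}I_N(v_1))$ and use the chain rule:
\[
   D_p^2 f_N=(D\Phi)^{\mathsf T}\,D_v^2u_N(x,\Phi(p))\,D\Phi+\sum_k \partial_{v_k}u_N(x,\Phi(p))\,D_p^2\Phi_k .
\]
The only nonzero entry of $D_v^2u_N$ is
\[
   \partial_{v_1}^2u_N=\operatorname{Re}\bigl(\e^{\ii Nx_1}J''(N^{1/3}v_1)\bigr),
\]
which is of order one and oscillates on the scale $N^{-1/3}$. The first-derivative term carries
\[
   I_N'(\eta)=N^{-1/3}J'(N^{1/3}\eta),
\]
so it is $O(N^{-1/3})$ in sup norm.

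Now fix $x=0$ and $t$ arbitrary, and compare the $(1,1)$ entry at the two points $p=0$ and $p'=(\rho,0,0)$, where $\rho=N^{-1/3}z_0/\la{\cdot}$-adjusted so that $\Phi_1(p')=N^{-1/3}z_0$. Here $z_0$ is a fixed number chosen as follows. From the formula for $J''$,
\[
   J''(0)=-\int_0^\infty\sigma^2\e^{-\sigma^3/3}\dd\sigma=-1,
\]
and by Riemann--Lebesgue $J''(z)\to0$ as $z\to\infty$. Hence we may choose $z_0$ with $\operatorname{Re}J''(z_0)\ge-\tfrac12$. Then $|p-p'|\simeq N^{-1/3}z_0$, so for $N\ge N_0$ both points lie in $B^p_{1/2}$ and $|p-p'|<1/2$.

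At these points:
\begin{itemize}
\item $D\Phi(0)=\Id$ and $D\Phi(p')=\Id+O(N^{-1/3})$, so the $(1,1)$ entries of the leading term differ from $\operatorname{Re}J''(0)$ and $\operatorname{Re}J''(z_0)$ by $O(N^{-1/3})$;
\item the first-order term contributes $O(N^{-1/3})$ at each point.
\end{itemize}
Therefore
\[
   \bigl|\partial^2_{p_1}f_N(0,0,p')-\partial^2_{p_1}f_N(0,0,0)\bigr|\ge \tfrac12-CN^{-1/3}\ge\tfrac14 .
\]
Dividing by $|p-p'|^\alpha\le CN^{-\alpha/3}$ gives $[D_p^2f_N]_{C_p^\alpha(\mathcal Q_{1/2})}\ge cN^{\alpha/3}$. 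Since the solution is stationary, $t$ plays no role.

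\emph{The consequence.} Along the family, the right-hand side of \eqref{eq:rel-false-est} is at most $CN^{-2/3}\to0$, while the left-hand side tends to $\infty$. This is a contradiction for $N$ large. The same argument applies to any right-hand side that stays bounded along the family; this includes momentum norms of the fixed coefficients $A,B$ and of $s_N$, which are all $N$-independent or zero.

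\emph{Main obstacle.} The only genuinely non-routine point is guaranteeing an order-one oscillation of $J''$ over a fixed $z$-range. The value $J''(0)=-1$ combined with Riemann--Lebesgue decay settles this without any Airy-function asymptotics. Everything else is bookkeeping of $O(N^{-1/3})$ errors coming from the nonlinearity of $\Phi$.
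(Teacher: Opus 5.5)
Your proposal is correct and follows essentially the same route as the paper: you rescale $I_N$ to the profile $J$, use $J''(0)=-1$ with Riemann--Lebesgue decay to fix $z_0$ (the paper's $Y$), and compare $\partial_{p_1p_1}f_N$ at $p=0$ and $p=\Psi(z_0N^{-1/3}e_1)$. The only difference is bookkeeping: you use the full matrix chain rule with cruder $O(N^{-1/3})$ error terms, whereas the paper restricts to the $p_1$-axis and obtains $O(N^{-2/3})$; either is sufficient.
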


\begin{proof}
We see that the $L^\infty$ estimate follows directly from
\[
   |I_N(\eta)|\le\int_0^\infty\e^{-N^2s^3/3}\,\dd s
   =C_0N^{-2/3},
   \qquad
   C_0:=\int_0^\infty\e^{-r^3/3}\,\dd r=3^{-2/3}\Gamma(1/3),
\]
after the substitution $r=N^{2/3}s$.
The forcing is independent of $p$, so its momentum H\"older seminorm vanishes. Next, we set
\[
   J(y)=\int_0^\infty\e^{-r^3/3}\e^{-\ii ry}\,\dd r.
\]
Then,
\[
   I_N(\eta)=N^{-2/3}J(N^{1/3}\eta),
   \qquad
   I_N''(\eta)=J''(N^{1/3}\eta).
\]
Moreover,
\[
   J''(0)=-\int_0^\infty r^2\e^{-r^3/3}\,\dd r=-1,
\]
and, since $r^2\e^{-r^3/3}\in L^1(0,\infty)$, the Riemann--Lebesgue lemma gives $J''(Y)\to0$ as $Y\to\infty$.  Fix $Y>0$ so that
\[
   \abs{\operatorname{Re}J''(Y)-\operatorname{Re}J''(0)}\ge\frac12.
\]
Let
\[
   v_N=YN^{-1/3}e_1,
   \qquad
   p_N=\Psi(v_N)=\frac{YN^{-1/3}}{\sqrt{1-Y^2N^{-2/3}}}e_1.
\]
Then $|p_N|\asymp N^{-1/3}$ and $p_N\in B_{1/2}$ for large $N$. Along the first coordinate axis, the scalar map
\[
   \phi(p_1)=\frac{p_1}{\sqrt{1+p_1^2}}
\]
satisfies
\[
   \phi'(p_1)=(1+p_1^2)^{-3/2},
   \qquad
   \phi''(p_1)=-3p_1(1+p_1^2)^{-5/2}.
\]
At $x=0$,
\[
   \partial_{p_1p_1}f_N(0,p_1e_1)
   =\operatorname{Re}\!\left(
      I_N''(\phi(p_1))\phi'(p_1)^2
      +I_N'(\phi(p_1))\phi''(p_1)
   \right).
\]
At $p_1=0$, this equals $\operatorname{Re}J''(0)$.  At $p_1=|p_N|$, one has $\phi(p_1)=YN^{-1/3}$,
\[
   \phi'(p_1)^2=1+O(N^{-2/3}),
   \qquad
   \phi''(p_1)=O(N^{-1/3}),
\]
and
\[
   I_N'(YN^{-1/3})=N^{-1/3}J'(Y).
\]
Therefore
\[
   \partial_{p_1p_1}f_N(0,p_N)
   =\operatorname{Re}J''(Y)+O(N^{-2/3}).
\]
For large $N$,
\[
   \abs{\partial_{p_1p_1}f_N(0,p_N)-\partial_{p_1p_1}f_N(0,0)}
   \ge\frac14.
\]
Since $|p_N|\asymp N^{-1/3}$,
\[
   [D_p^2f_N]_{C_p^\alpha(\mathcal Q_{1/2})}
   \ge c|p_N|^{-\alpha}
   \ge cN^{\alpha/3}.
\]
The contradiction to \eqref{eq:rel-false-est} is immediate.
\end{proof}

\begin{remark}\label{rem:consistency}
Theorem~\ref{thm:rel-counterexample} is consistent with the Schauder estimate of \cite[Theorem~6.5]{HSTT}, which retains a Lorentzian spatial modulus.  The forcing $s_N$ has spatial frequency $N$, and its spatial H\"older seminorm at the kinetic exponent,
\[
   [s_N]_{C_x^{\alpha/3}(\mathcal Q_1)}\asymp N^{\alpha/3},
\]
has precisely the size of the Hessian growth.  The construction thus identifies the spatial modulus as the quantity that cannot be deleted from the right-hand side: the result rules out the endpoint obtained by removing all spatial regularity.
\end{remark}

\begin{remark}[Lorentzian H\"older geometry]\label{rem:lorentz}
The oscillation in Theorem~\ref{thm:rel-counterexample} is also visible in the Lorentzian H\"older seminorm of \cite{HSTT}.  For the pairs used in the proof, the two points share $(t,x)$ and one momentum is zero, and the definition of the Lorentzian distance in \cite[(5.6)]{HSTT} gives exactly
\[
   d_L\bigl((t,x,p),(t,x,0)\bigr)=|p|.
\]
Hence the same $N^{\alpha/3}$ lower bound holds with the Lorentzian distance in the denominator.
\end{remark}

\begin{corollary}\label{cor:no-momentum-only-nonlinear-control}
Let $\alpha,\beta\in(0,1)$ and set
\[
   \|h\|_{C_p^\beta(Q)}:=\|h\|_{L^\infty(Q)}+[h]_{C_p^\beta(Q)}.
\]
There is no function
\[
   \mathcal F:[0,\infty)^4\longrightarrow[0,\infty)
\]
that is bounded on bounded subsets and for which
\begin{equation}\label{eq:no-nonlinear-momentum-control}
   [D_p^2f]_{C_p^\alpha(\mathcal Q_{1/2})}
   \le
   \mathcal F\!\left(
      \|f\|_{L^\infty(\mathcal Q_1)},
      \|s\|_{C_p^\beta(\mathcal Q_1)},
      \|A\|_{C_p^\beta(B_1)},
      \|B\|_{C_p^\beta(B_1)}
   \right)
\end{equation}
holds for every smooth solution of \eqref{eq:rel-linear} on $\mathcal Q_1$, even within a fixed uniformly elliptic class.
\end{corollary}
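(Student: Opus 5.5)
We argue by contradiction, using the family $(f_N,s_N)$ from Theorem~\ref{thm:rel-counterexample} together with the fixed coefficients $A,B$ of \eqref{eq:AB-def}. Suppose such an $\mathcal F$ exists. The plan is to show that all four arguments of $\mathcal F$ stay in a fixed bounded set $K\subset[0,\infty)^4$ as $N\to\infty$. The right-hand side of \eqref{eq:no-nonlinear-momentum-control} is then bounded by $\sup_K\mathcal F<\infty$. The left-hand side, however, is at least $cN^{\alpha/3}\to\infty$ by Theorem~\ref{thm:rel-counterexample}. This is the contradiction. The solutions are smooth and stationary, and by Lemma~\ref{lem:conjugation} they solve \eqref{eq:rel-linear} on $\mathcal Q_1$. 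The operator lies in a fixed uniformly elliptic class because $\Id\le A\le 8\Id$ on $B_1$.

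The four bounds are checked in order.

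\emph{First argument.} We have $\|f_N\|_{L^\infty(\mathcal Q_1)}\le C_0N^{-2/3}\le C_0$ by the theorem.

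\emph{Second argument.} Since $s_N=\cos(Nx_1)$ does not depend on $p$, its momentum seminorm $[s_N]_{C_p^\beta}$ vanishes. Also $\|s_N\|_{L^\infty}\le1$, so $\|s_N\|_{C_p^\beta(\mathcal Q_1)}\le 1$.

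\emph{Third and fourth arguments.} The coefficients $A$ and $B$ do not depend on $N$. They are smooth on $\overline{B_1}$, so $\|A\|_{C_p^\beta(B_1)}$ and $\|B\|_{C_p^\beta(B_1)}$ are fixed finite numbers. One can bound them, for instance, by $\|\cdot\|_{L^\infty}+\|\nabla\cdot\|_{L^\infty}\,2^{\beta-1}$, using $|p-p'|<1/2$ in the definition of the seminorm.

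Hence all arguments lie in $K=[0,C_0]\times[0,1]\times\{a\}\times\{b\}$, where $a=\|A\|_{C_p^\beta(B_1)}$ and $b=\|B\|_{C_p^\beta(B_1)}$. The set $K$ is bounded, so $\mathcal F$ is bounded on it by hypothesis.

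There is one minor point about domains. The lower bound in Theorem~\ref{thm:rel-counterexample} is stated on $\mathcal Q_{1/2}$, with the pair of points $(0,0,0)$ and $(0,0,p_N)$. For large $N$ these satisfy $|p_N|<1/2$, so they lie in $\mathcal Q_{1/2}$, and the pair is admissible in the definition of $[\cdot]_{C_p^\alpha}$. This is already contained in the theorem.

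The only real obstacle would be if the norms of $A$ and $B$ were allowed to depend on $N$. Here they do not, because the operator is one fixed operator. So the argument is essentially immediate: combine the uniform boundedness of the right-hand side with the divergence $N^{\alpha/3}$ of the left-hand side.
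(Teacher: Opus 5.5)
Your proof is correct and follows the paper's argument: you fix the coefficients \eqref{eq:AB-def}, take the family from Theorem~\ref{thm:rel-counterexample}, show that all four arguments of $\mathcal F$ stay in a fixed bounded set, and contradict this with the lower bound $cN^{\alpha/3}\to\infty$. The only cosmetic difference is that the paper passes to $1+f_N$ to keep the $L^\infty$ norm in $[1/2,3/2]$, whereas you use $f_N$ directly with $\|f_N\|_{L^\infty(\mathcal Q_1)}\le C_0N^{-2/3}\le C_0$; this is equally valid, since $\mathcal F$ is assumed bounded on every bounded subset of $[0,\infty)^4$, including those containing $0$.
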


\begin{proof}
Apply \eqref{eq:no-nonlinear-momentum-control} to the fixed coefficients
\eqref{eq:AB-def} and the family from Theorem~\ref{thm:rel-counterexample}.
To keep the solution norm in a fixed compact interval, set
\[
   \widehat f_N:=1+f_N.
\]
The constant function is annihilated by the transport, diffusion, and drift
terms, so $\widehat f_N$ solves the same equation with source $s_N$.  For all
large $N$,
\[
   \frac12\le \widehat f_N\le\frac32.
\]
Furthermore,
\[
   \|s_N\|_{C_p^\beta(\mathcal Q_1)}
   =\|s_N\|_{L^\infty(\mathcal Q_1)}=1,
\]
because $s_N$ is independent of $p$, and the two coefficient norms in
\eqref{eq:no-nonlinear-momentum-control} are fixed finite constants.  Hence all
four arguments of $\mathcal F$ remain in a fixed bounded subset of
$[0,\infty)^4$.  On the other hand,
\[
   [D_p^2\widehat f_N]_{C_p^\alpha(\mathcal Q_{1/2})}
   =[D_p^2f_N]_{C_p^\alpha(\mathcal Q_{1/2})}
   \ge cN^{\alpha/3}\longrightarrow\infty,
\]
contradicting the assumed boundedness of $\mathcal F$ on bounded subsets.
\end{proof}

\begin{proposition}[A zero-source variant]\label{prop:rel-zero-source}
Let $\alpha\in(0,1)$.  For all sufficiently large $N$, there are smooth stationary solutions
$\widetilde f_N$ of
\begin{equation}\label{eq:rel-zero-source}
   \partial_t\widetilde f_N+\frac p{\la p}\cdot\nabla_x\widetilde f_N
   =\tr\!\left(A(p)D_p^2\widetilde f_N\right)
    +B(p)\cdot\nabla_p\widetilde f_N+c_N(x,p)\widetilde f_N
\end{equation}
on $\mathcal Q_1$, with zero external source, such that
\[
   \frac12\le \widetilde f_N\le\frac32,
   \qquad
   \norm{c_N}_{L^\infty(\mathcal Q_1)}\le2,
   \qquad
   [c_N]_{C_p^\alpha(\mathcal Q_1)}
      \le C_\alpha N^{(\alpha-2)/3}\longrightarrow0,
\]
whereas
\[
   [D_p^2\widetilde f_N]_{C_p^\alpha(\mathcal Q_{1/2})}
   \ge cN^{\alpha/3}.
\]
Thus the relativistic momentum-only endpoint also fails when the source is identically zero and the hidden spatial oscillation is placed in a uniformly bounded zeroth-order coefficient whose momentum H\"older seminorm vanishes in the limit.
\end{proposition}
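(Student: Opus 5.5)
The plan is to set $\widetilde f_N:=\widehat f_N=1+f_N$ and absorb the source into a zeroth-order coefficient, taking
\[
   c_N(x,p):=-\frac{s_N(x)}{\widetilde f_N(x,p)}=-\frac{\cos(Nx_1)}{1+f_N(x,p)} .
\]
By Theorem~\ref{thm:rel-counterexample} and the proof of Corollary~\ref{cor:no-momentum-only-nonlinear-control}, $\widetilde f_N$ solves \eqref{eq:exact-conj} with source $s_N$. The transport, diffusion and drift annihilate constants, so the equation reads $\partial_t\widetilde f_N+\Phi(p)\cdot\nabla_x\widetilde f_N=\tr(AD_p^2\widetilde f_N)+B\cdot\nabla_p\widetilde f_N+s_N$. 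Since $s_N=-c_N\widetilde f_N$, this is exactly \eqref{eq:rel-zero-source} with zero external source. Smoothness of $c_N$ follows once $\widetilde f_N$ is bounded away from $0$.

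For the bounds, $\|f_N\|_{L^\infty}\le C_0N^{-2/3}$ gives $\frac12\le\widetilde f_N\le\frac32$ for large $N$, and then $|c_N|\le 1/\widetilde f_N\le 2$. The Hessian lower bound is inherited unchanged, because $D_p^2\widetilde f_N=D_p^2f_N$; Theorem~\ref{thm:rel-counterexample} then yields $[D_p^2\widetilde f_N]_{C_p^\alpha(\mathcal Q_{1/2})}\ge cN^{\alpha/3}$.

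The main step is the momentum Hölder bound on $c_N$. For fixed $x$,
\[
   |c_N(x,p)-c_N(x,p')|\le\frac{|f_N(x,p)-f_N(x,p')|}{\widetilde f_N(x,p)\,\widetilde f_N(x,p')}\le 4\,|f_N(x,p)-f_N(x,p')|,
\]
so it suffices to bound $[f_N]_{C_p^\alpha}$ on $B_1^p$. Interpolate between two estimates:
\[
   |f_N(x,p)-f_N(x,p')|\le\min\bigl(2C_0N^{-2/3},\ \|\nabla_pf_N\|_{L^\infty}|p-p'|\bigr).
\]
For the gradient, use $I_N'(\eta)=N^{-1/3}J'(N^{1/3}\eta)$ with $|J'|\le\int_0^\infty r\e^{-r^3/3}\dd r<\infty$, and note that $D\Phi$ is bounded on $B_1^p$. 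Together these give $\|\nabla_pf_N\|_{L^\infty}\le CN^{-1/3}$.

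Now split by the size of $|p-p'|$, using $\min(a,b\delta)\le a^{1-\alpha}(b\delta)^\alpha$. This gives
\[
   \frac{|f_N(x,p)-f_N(x,p')|}{|p-p'|^\alpha}\le C\bigl(N^{-2/3}\bigr)^{1-\alpha}\bigl(N^{-1/3}\bigr)^{\alpha}=CN^{(\alpha-2)/3},
\]
and hence $[c_N]_{C_p^\alpha(\mathcal Q_1)}\le C_\alpha N^{(\alpha-2)/3}\to0$. This exponent matches the claim, so no further obstacle is expected beyond verifying the uniform bound on $J'$ and on $D\Phi$ over $B_1^p$.
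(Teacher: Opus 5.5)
Your argument follows the paper's proof essentially line by line: the same choice $\widetilde f_N=1+f_N$, the same reduction $|c_N(x,p)-c_N(x,p')|\le 4|f_N(x,p)-f_N(x,p')|$, and the same H\"older bound. Your interpolation $\min(a,b\delta)\le a^{1-\alpha}(b\delta)^\alpha$, between $\|f_N\|_{L^\infty}\le CN^{-2/3}$ and $\|\nabla_pf_N\|_{L^\infty}\le CN^{-1/3}$, is exactly what underlies the paper's bound $[I_N]_{C^\alpha(\R)}\le N^{-2/3}N^{\alpha/3}[J]_{C^\alpha(\R)}$, where $[J]_{C^\alpha}<\infty$ because $J$ and $J'$ are bounded. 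The paper interpolates at the level of $J$ and then composes with the Lipschitz map $\Phi$; you interpolate directly on $f_N$.

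There is, however, a sign error in your choice of $c_N$. With $c_N=-s_N/\widetilde f_N$ you have $c_N\widetilde f_N=-s_N$. Substituting this into $\partial_t\widetilde f_N+\Phi(p)\cdot\nabla_x\widetilde f_N=\tr(AD_p^2\widetilde f_N)+B\cdot\nabla_p\widetilde f_N+s_N$ puts $-c_N\widetilde f_N$ on the right-hand side, not the $+c_N\widetilde f_N$ required by \eqref{eq:rel-zero-source}. The sign of $s_N$ itself is correct: the identity $I_N''-\ii N\eta I_N=-1$ gives $v_1\partial_{x_1}u_N-\partial_{v_1}^2u_N=+\cos(Nx_1)$. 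So the fix is to take $c_N:=s_N/\widetilde f_N$, as the paper does. Every estimate you prove (the bound $|c_N|\le 2$, the bound on differences, and the H\"older interpolation) involves only absolute values and carries over verbatim. With this correction the proof is complete.
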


\begin{proof}
Let $f_N$ and $s_N$ be given by \eqref{eq:rel-fN}.  Since
$\norm{f_N}_{L^\infty}\le CN^{-2/3}$, for large $N$ we may set
\[
   \widetilde f_N:=1+f_N,
   \qquad
   c_N:=\frac{s_N}{1+f_N}.
\]
The constant function $1$ is annihilated by the differential part of the operator in
\eqref{eq:rel-zero-source}, so the equation follows from the equation for $f_N$.
The pointwise bounds are immediate. It remains to estimate the momentum seminorm of $c_N$.  Since $s_N$ is independent of $p$,
\[
   |c_N(x,p)-c_N(x,p')|
   \le4|f_N(x,p)-f_N(x,p')|.
\]
The map $\Phi(p)=p/\la p$ is Lipschitz on $B_1$, and the scaling in
\eqref{eq:IN} gives
\[
   [I_N]_{C^\alpha(\R)}
   \le N^{-2/3}N^{\alpha/3}[J]_{C^\alpha(\R)}
   \le C_\alpha N^{(\alpha-2)/3},
\]
where $[J]_{C^\alpha(\R)}$ is finite because $J$ and $J'$ are bounded.
Consequently
\[
   [c_N]_{C_p^\alpha(\mathcal Q_1)}
   \le C_\alpha N^{(\alpha-2)/3}.
\]
Finally, $D_p^2\widetilde f_N=D_p^2f_N$, so the lower bound follows from
Theorem~\ref{thm:rel-counterexample}.
\end{proof}

\end{document}